\newtheorem{theorem}{Theorem}[section]
\theoremstyle{definition}
\newtheorem{definition}[theorem]{Definition}
\newtheorem{example}[theorem]{Example}
\theoremstyle{remark}
\newtheorem{remark}[theorem]{Remark}
\begin{document}

\markboth{S. Jahanshahi, E. Babolian, D. F. M. Torres and A. Vahidi}{%
Approximating fractional integrals and derivatives}

\title[Approximating fractional integrals and derivatives]{%
A Fractional Gauss--Jacobi quadrature rule\\
for approximating fractional integrals\\
and derivatives\thanks{Part of first author's PhD project.
Partially supported by Islamic Azad University, Tehran, Iran;
and CIDMA-FCT, Aveiro, Portugal, within project UID/MAT/04106/2013.}}


\author[S. Jahanshahi, E. Babolian, D. F. M. Torres and A. Vahidi]{%
S. Jahanshahi, E. Babolian, D. F. M. Torres and A. Vahidi}

\address{Salman Jahanshahi\newline
\indent Department of Mathematics, 
Science and Research Branch,
Islamic Azad University,\newline
\indent Tehran, Iran}
\email{s.jahanshahi@iausr.ac.ir}
   
\address{Esmail Babolian\newline
\indent Department of Mathematics, 
Science and Research Branch,
Islamic Azad University,\newline
\indent Tehran, Iran}
\email{babolian@khu.ac.ir}

\address{Delfim F. M. Torres\newline
\indent Center for Research and Development in Mathematics and Applications (CIDMA),\newline
\indent Department of Mathematics, University of Aveiro, 3810--193 Aveiro, Portugal}
\email{delfim@ua.pt}

\address{Alireza Vahidi\newline
\indent Department of Mathematics, Shahre Rey Branch,
Islamic Azad University,\newline
\indent Tehran, Iran}
\email{alrevahidi@iausr.ac.ir}
 

\begin{abstract}
We introduce an efficient algorithm for computing fractional integrals
and derivatives and apply it for solving problems
of the calculus of variations of fractional order. 
The proposed approximations are particularly useful 
for solving fractional boundary value problems.
As an application, we solve a special class of fractional
Euler--Lagrange equations. The method is based on Hale and Townsend algorithm
for finding the roots and weights of the fractional Gauss--Jacobi quadrature
rule and the predictor-corrector method introduced by Diethelm for solving
fractional differential equations. Illustrative examples show that the
given method is more accurate than the one introduced in
[Comput. Math. Appl. 66 (2013), no.~5, 597--607],
which uses the Golub--Welsch algorithm for evaluating 
fractional directional integrals.
\end{abstract}


\keywords{Fractional integrals, fractional derivatives, Gauss--Jacobi quadrature rule,
fractional differential equations, fractional variational calculus.}


\subjclass[2010]{26A33; 49K05.}

\maketitle


\section{Introduction}

Finding numerical approximations of fractional integrals or fractional derivatives
of a given function is one of the most important problems in theory of numerical
fractional calculus. The operators of fractional integration and fractional
differentiation are more complicated than the classical ones, so their evaluation 
is also more difficult than the integer order case. Li et al. use spectral 
approximations for computing the fractional integral and the Liouville--Caputo 
derivative \cite{sal2}. They also developed
numerical algorithms to compute fractional integrals and Liouville--Caputo derivatives
and for solving fractional differential equations based on piecewise polynomial
interpolation \cite{sal3}. In \cite{sal4}, Pooseh et al. presented two approximations
derived from continuous expansions of Riemann--Liouville fractional derivatives
into series involving integer order derivatives and they present application
of such approximations to fractional differential equations and fractional problems
of the calculus of variations. Some other computational algorithms
are also introduced in \cite{sal7,sal5,sal6}.
For increasing the accuracy of the calculation, using the Gauss--Jacobi quadrature rule
is appropriate for removing the singularity of the integrand. So considering
the nodes and weights of the quadrature rule is an important problem.
There are many good papers in the literature addressing the question of how to
find the nodes and weights of the Gauss quadrature rule---see \cite{sal10,sal9,sal8}
and references therein. The more applicable and developed method
is the Golub--Welsch (GW) algorithm \cite{sal12,sal11}, that is used by many
of the mathematicians who work in numerical analysis. This method takes $O(n^2)$
operations to solve the problem of finding the nodes and weights.
Here we use a new method introduced by Hale and  Townsend \cite{sal13},
which is based on the Glasier--Liu--Rokhlin (GLR) algorithm \cite{sal14}.
It computes all the nodes and weights of the $n$-point quadrature rule
in a total of $O(n)$ operations.

The structure of the paper is as follows.
In Section~\ref{sec2}, we introduce the definitions of fractional operators
and some relations between them. Section~\ref{sec3} discusses the Gauss--Jacobi
quadrature rule of fractional order and its application to approximate
the fractional operators. In Section~\ref{sec4} we present two methods
for finding the nodes and weights of Gauss--Jacobi and discuss their advantages
and disadvantages. Two illustrative examples are solved. In Section~\ref{sec5}
applications to ordinary fractional differential equations are presented.
In Section~\ref{sec6} we investigate problems of the calculus of variations
of fractional order and present a new algorithm for solving boundary
value problems of fractional order. We end with Section~\ref{sec7} 
of conclusions and possible directions of future work.


\section{Preliminaries and notations about fractional calculus}
\label{sec2}

In this section we give some necessary preliminaries of the
fractional calculus theory \cite{MR2218073,sal15},
which will be used throughout the paper.

\begin{definition}
The left and right Riemann--Liouville fractional integrals
of order $\alpha$ of a given function $f$ are defined by
\begin{equation*}
_aI_x^\alpha f(x)=\frac{1}{\Gamma(\alpha)}
\int_a^x(x-t)^{\alpha-1}f(t)dt
\end{equation*}
and
\begin{equation*}
_xI_b^{\alpha}f(x)
=\frac{1}{\Gamma(\alpha)}\int_x^b(t-x)^{\alpha-1}f(t)dt,
\end{equation*}
respectively, where $\Gamma$ is Euler's gamma function, that is,
\begin{equation*}
\Gamma(x)=\int_0^\infty t^{x-1}e^{-t}dt,
\end{equation*}
$\alpha>0$ with $n-1<\alpha\leq n$, $n\in\mathbb{N}$, and $a<x<b$.
The left Riemann--Liouville fractional operator has the following properties:
\begin{equation*}
{_aI_x^\alpha} {_aI_x^\beta}
= {_aI_x}^{\alpha+\beta},
\quad {_aI_x^\alpha} {_aI_x^\beta}
= {_aI_x^\beta} {_aI_x^\alpha},
\end{equation*}
\begin{equation*}
_aI_x^\alpha x^\mu=\frac{\Gamma(\mu+1)}{\Gamma(\alpha+\mu+1)}x^{\alpha+\mu},
\end{equation*}
where $\alpha,\beta\geq 0$ and $\mu>-1$. Similar relations hold for the right
Riemann--Liouville fractional operator. On the other hand, we have the left
and right Riemann--Liouville fractional derivatives of order $\alpha>0$ 
that are defined by
\begin{equation*}
_aD_x^{\alpha}f(x)=\frac{1}{\Gamma(n-\alpha)}\frac{d^n}{dx^n}\int_a^x(x-t)^{n-\alpha-1}f(t)dt
\end{equation*}
and
\begin{equation*}
_xD_b^\alpha f(x)=\frac{(-1)^n}{\Gamma(n-\alpha)}\frac{d^n}{dx^n}\int_x^b(t-x)^{n-\alpha-1}f(t)dt,
\end{equation*}
respectively.
\end{definition}

There are some disadvantages when trying to model real world phenomena with fractional
differential equations, when fractional derivatives are taken in Riemann--Liouville sense.
One of them is that the Riemann--Liouville derivative of the constant function 
is not zero. Therefore, a modified definition of the fractional differential operator,
which was first considered by Liouville and many decades later
proposed by Caputo \cite{sal16}, is considered.

\begin{definition}
The left and right fractional differential operators in Liouville--Caputo 
sense are given by
\begin{equation*}
_a^CD_x^\alpha f(x)
=\frac{1}{\Gamma(n-\alpha)}\int_a^x(x-t)^{n-\alpha-1}f^{(n)}(t)dt
\end{equation*}
and
\begin{equation*}
_x^CD_b^\alpha f(x)
=\frac{(-1)^n}{\Gamma(n-\alpha)}\int_x^b(t-x)^{n-\alpha-1}f^{(n)}(t)dt,
\end{equation*}
respectively.
\end{definition}
The Liouville--Caputo derivative has the following two properties
for $n-1<\alpha\leq n$ and $f\in L_1[a,b]$:
\begin{equation*}
({_a^CD_x^\alpha} {_aI_x^\alpha} f)(x)=f(x)
\end{equation*}
and
\begin{equation*}
({_aI_x^\alpha} {_a^CD_x^\alpha} f)(x)
=f(x)-\sum_{k=0}^{n-1}f^{(k)}(0^{+})\frac{(x-a)^k}{k!},\quad t>0.
\end{equation*}

\begin{remark}
Using the linearity property of the ordinary integral operator, one deduces
that left and right Riemann--Liouville integrals, left and right
Riemann--Liouville derivatives and left and right
Liouville--Caputo derivatives are linear operators.
\end{remark}

Another definition of a fractional differential operator, that is useful for numerical
approximations, is the Gr\"{u}nwald--Letnikov derivative, which is a generalization 
of the ordinary derivative. It is defined as follows:
\begin{equation*}
D_{GL}^{\alpha}=\lim_{n\to\infty}\frac{(\frac{t}{N})^{-\alpha}}{\Gamma(-\alpha)}
\sum_{j=0}^{n-1}\frac{\Gamma(j-\alpha)}{\Gamma(j+1)}f\left(t-\frac{tj}{n}\right).
\end{equation*}


\section{Fractional Gauss--Jacobi quadrature rule}
\label{sec3}

It is well known that the Jacobi polynomials
$\{P_n^{(\lambda,\nu)}(x)\}_{n=0}^{\infty}$, $\lambda, \nu >-1$, $x\in [-1,1]$,
are the orthogonal system of polynomials
with respect to the weight function
$$
(1-x)^\lambda (1+x)^\nu,
\quad \lambda, \nu >-1,
$$
on the segment $[-1,1]$:
\begin{equation}
\label{eq3.1}
\int_{-1}^{1}(1-x)^\lambda (1+x)^\nu P_n^{(\lambda,\nu)}(x)
P_m^{(\lambda,\nu)}(x) dx=\partial_n^{\lambda,\nu}\delta_{mn},
\end{equation}
where
\begin{equation*}
\partial_n^{\lambda,\nu}=\Vert P_n^{(\lambda,\nu)}\Vert_{w^{\lambda,\nu}}^2,
\quad w^{\lambda,\nu}(x)=(1-x)^\lambda (1+x)^\nu,
\end{equation*}
$$
\delta_{mn}=\left\{
\begin{array}{ll}
1, & m=n,\\
0, & m\neq n
\end{array} \right.
$$
(see, e.g., \cite{sal17}). These polynomials satisfy
the three-term recurrence relation
\begin{equation}
\label{eq3.4}
\begin{gathered}
P_0^{(\lambda,\nu)}(x)=1,
\quad P_1^{(\lambda,\nu)}(x)=\frac{1}{2} (\lambda+\nu+2) x +\frac{1}{2} (\lambda-\nu),\\
P_{n+1}^{(\lambda,\nu)}(x)=\left( a_n^{\lambda,\nu} x- b_n^{\lambda,\nu}\right)
P_n^{(\lambda,\nu)}(x) - c_n^{\lambda,\nu} P_{n-1}^{(\lambda,\nu)}(x), 
\quad n\geq 2,
\end{gathered}
\end{equation}
where
\begin{align*}
a_n^{\lambda,\nu}
&=\dfrac{(2n+\lambda+\nu+1) (2n+\lambda+\nu+2)}{2(n+1) (n+\lambda+\nu+1)},\\
&\quad\\
b_n^{\lambda,\nu}
&=\dfrac{(\nu^2-\lambda^2) (2n+\lambda+\nu+1)}{2(n+1) 
(n+\lambda+\nu+1) (2n+\lambda+\nu)},\\
&\quad\\
c_n^{\lambda,\nu}
&=\dfrac{(n+\lambda) (n+\nu) (2n+\lambda+\nu+2)}{(n+1) 
(n+\lambda +\nu +1) (2n+\lambda +\nu)}.
\end{align*}
The explicit form of the Jacobi polynomials is
\begin{equation}
\label{eq3.5}
P_n^{(\lambda,\nu)}(x)=\sum_{k=0}^{n}
\frac{2^{n-k}n!(k+\nu+1)_{n-k}}{(n-k)!(n+k+\lambda+\nu+1)_{n-k}k!}(t-1)^k,
\end{equation}
where we use Pchhammer's notation:
$$
(a)_l=a(a+1)(a+2)\cdots (a+l-1)
$$
(see \cite{sal25}). Furthermore, the Jacobi polynomials
satisfy in the following relations:
\begin{equation*}
P_n^{(\lambda,\nu)}(-x)=(-1)^n  P_n^{(\lambda,\nu)}(x),
\end{equation*}
\begin{equation*}
\frac{d}{dx}P_n^{(\lambda,\nu)} (-x)=(-1)^n \frac{d}{dx}P_n^{(\lambda,\nu)}(x),
\end{equation*}
\begin{multline}
\label{equ3.4}
(2n+\lambda+\nu) (1-x^2) \frac{d}{dx} P_n^{(\lambda,\nu)}(x)\\
=n\left( \lambda-\nu-(2n+\lambda+\nu)x\right) P_n^{(\lambda,\nu)}(x)
+2(n+\lambda) (n+\nu)P_{n-1}^{(\lambda,\nu)}(x),
\end{multline}
\begin{equation*}
P_n^{(\lambda,\nu)}(1)=\binom{n+\lambda}{n},
\quad P_n^{(\lambda,\nu)}(-1)
=(-1)^j \,\dfrac{\Gamma (n+\nu+1)}{n!\Gamma(\nu+1)}.
\end{equation*}
On the other hand, the Gauss--Jacobi quadrature
formula with remainder term is given by
\begin{equation}
\label{eq:GJqf}
\int_{-1}^1 (1-x)^\lambda (1+x)^\nu f(x) dx
= \sum_{k=1}^n A_k f(x_k)+R_n(f),
\end{equation}
where $x_k$, $k=1,\ldots, n$, are the zeros of
$P_n^{(\lambda,\nu)}$, the weights $A_k$ are given by
\begin{equation*}
A_k = \dfrac{2^{\lambda+\nu+1} \Gamma(\lambda+n+1)
\Gamma(\nu+n+1)}{n! \Gamma (\lambda+\nu+n+1)
\left(\frac{d}{dx} P_n^{(\lambda,\nu)}(x_k)\right)^2 (1-x_k^2)}
\end{equation*}
and the error is given by
\begin{equation*}
R_n(f) = \dfrac{f^{(2n)}(\eta)}{(2n)!}
\,\dfrac{2^{\lambda+\nu+2n+1} n! \Gamma(\lambda+\eta+1)
\Gamma(\nu +\eta +1)\Gamma(\nu+\lambda+\eta
+1)}{(\lambda+\nu+2n+1)\left(\Gamma(\lambda+\nu+2n+1)\right)^2}
\end{equation*}
(see \cite{sal17}). We know that formula \eqref{eq:GJqf}
is exact for all polynomials of degree up to $2n-1$
and is valid if $f(x)$ possesses no singularity in $[-1,1]$
except at points $\pm 1$.

Now, as a special case, we introduce the fractional Jacobi polynomials
$f_n^{\alpha-1}(x)$, that is, $\lambda=\alpha-1$ and $\nu=0$ in \eqref{eq3.1}.
The set of fractional Jacobi polynomials
$\{f_n^{\alpha-1}\}_{n=0}^{\infty}$
is an orthogonal system of polynomials that are orthogonal
with respect to the weight function $(1-x)^{\alpha-1}$,
$\alpha>0$, on the segment $[-1,1]$. It means that
\begin{equation*}
\int_{-1}^1w^{\alpha-1}(x) f_n^{\alpha-1}(x) f_m^{\alpha-1}(x) dx
= \zeta_n^{\alpha-1} \delta_{mn},
\end{equation*}
where
\begin{equation*}
\zeta_n^{\alpha-1}
=\Vert f_n^{\alpha-1}\Vert_{w^{\alpha-1}}^2,
\quad w^{\alpha-1}(x)= (1-x)^{\alpha-1}.
\end{equation*}
The three-term recurrence relation for fractional Jacobi polynomial is given by
\begin{gather*}
f_0^{\alpha-1}(x)=1,
\quad f_1^{\alpha-1}(x)=\frac{\alpha-1}{2} (x+1),\\
f_{n+1}^{\alpha-1} (x)= \left(A_n^\alpha x+ B_n^\alpha \right)
f_n^{\alpha-1}-C_n^\alpha f_{n-1}^{\alpha-1}(x),\quad n\geq 2,
\end{gather*}
where
\begin{align*}
A_n^\alpha
&= \dfrac{(2n+\alpha) (2n+\alpha+1)}{2(n+1) (n+\alpha)},\\
&\quad \\
B_n^\alpha
&= \dfrac{(\alpha-1)^2 (2n+\alpha)}{2(n+1)(2n+\alpha-1) (n+\alpha)},\\
&\quad \\
C_n^\alpha
&=\dfrac{n(n+\alpha-1) (2n +\alpha +1)}{(n+1) (n+\alpha) (2n+\alpha-1)}.
\end{align*}
Using \eqref{equ3.4}, we have that
\begin{multline}
\label{equ3.5}
(2n+1-\alpha) (1-x^2) \frac{d}{dx} f_n^{\alpha -1} (x)\\
=n\left(1-\alpha-(2n+1-\alpha)x\right) f_n^{\alpha -1}(x)
+2n(n+1-\alpha) f_{n-1}^{\alpha -1}(x).
\end{multline}
Furthermore, the fractional Gauss--Jacobi quadrature rule is
\begin{equation}
\label{eq3.19}
\int_{-1}^1 (1-x)^{\alpha-1} f(x) dx = \sum_{k=1}^n l_k f(x_k) + E_n^\alpha (f),
\end{equation}
where $x_k$, $k=1,2,\ldots, n$, are the zeros
of $f_n^{\alpha-1}$, the weights $l_k$ are given by
\begin{equation*}
l_k=\dfrac{2^\alpha}{(1-x_k^2)[\frac{d}{dx}f_n^{\alpha-1} (x_k)]^2}
\end{equation*}
and
\begin{equation*}
E_n^\alpha (f)=\dfrac{2^n f^{(2n)}(\eta)}{(2n)! (2n+\alpha)}
\, \left(\dfrac{2^n n! \Gamma (n+\alpha)}{\Gamma (2n+\alpha)}\right)^2.
\end{equation*}
Consider the left Riemann--Liouville fractional integral
of order $\alpha>0$ for $x>0$:
\begin{equation}
\label{eq3.22}
_0I_x^\alpha f(x)=\dfrac{1}{\Gamma (\alpha)}
\int_0^x (x-t)^{\alpha-1} f(t) dt.
\end{equation}
For calculating the above integral at the collocation nodes,
first we transform the interval $[0,x]$ into the segment
$[-1,1]$ using the changing of variable
$$
s=2\left(\frac{t}{x}\right)-1,
\quad ds=\frac{2}{x} dt.
$$
Therefore, we can rewrite \eqref{eq3.22} as
\begin{equation*}
\dfrac{1}{\Gamma (\alpha)} \int_0^x (x-t)^{\alpha-1} f(t) dt
=\left(\frac{x}{2}\right)^\alpha \dfrac{1}{\Gamma(\alpha)}
\int_{-1}^1 \dfrac{\rho (s) ds }{(1-s)^{1-\alpha}},
\end{equation*}
where
$$
\rho(s)=f\left(\frac{x}{2} (s+1)\right).
$$
So, we have
\begin{equation}
\label{eq3.24}
_0I_x^\alpha f(x)=\dfrac{1}{\Gamma (\alpha)}\left(\frac{x}{2}\right)^\alpha
\int_{-1}^1 (1-s)^{\alpha-1} \rho(s) ds,
\end{equation}
and using the $n$ point fractional Gauss--Jacobi quadrature rule
\eqref{eq3.19} for \eqref{eq3.24} we get
\begin{equation}
\label{eq3.25}
_0I_{s_i}^\alpha f (x) = \dfrac{(\frac{s_i}{2})^\alpha}{\Gamma (\alpha)}
\sum_{k=1}^n l_k f \left( \frac{s_i}{2} (x_k+1)\right)+ E_n^\alpha (f),
\end{equation}
where
\begin{equation}
\label{eq3.26}
l_k=\dfrac{2^\alpha}{(1-x_k)^2 \left(\frac{d}{dx} f_n^{\alpha-1} (x_k)^2\right)},
\end{equation}
\begin{equation*}
E_n^\alpha(f) = \left(\frac{s_i}{2}\right)^\alpha \frac{1}{\Gamma (\alpha)}
\, \dfrac{f^{(2n)} \left(\frac{s_i}{2}(\eta+1)\right) }{(2n)!}
\, \dfrac{2^{2n} (n!)^2}{(2n+\alpha)} \, \left(
\dfrac{\Gamma (n+\alpha)}{\Gamma (2n+\alpha)}\right)^2
\end{equation*}
and $x_k$, $k=1, \ldots, n$, are quadrature nodes. In this way
we just need to compute the nodes and weights of the
fractional Gauss--Jacobi quadrature rule using a fast and accurate algorithm.
Similarly, an approximation formula for computing the left Liouville--Caputo 
fractional derivative
of a smooth function $f$, using the suggested method, is given by
\begin{equation}
\label{eq3.28}
{_0^CD_{s_i}^\alpha} f(x)=\dfrac{(\frac{s_i}{2})^{1-\alpha}}{\Gamma(1-\alpha)}
\sum_{k=1}^n l_k f' \left( \frac{s_i}{2} (x_k+1)\right)+ E_n^\alpha (f),
\quad \alpha \in (0,1),
\end{equation}
where $x_k$, $k=1,2,\ldots,n$, are the zeros of $f_n^{-\alpha}$,
$$
l_k=\dfrac{2^{1-\alpha}}{(1-x_k)^2 \left(\frac{d}{dx} f_n^{-\alpha} (x_k)^2\right)}
$$
and
$$
E_n^\alpha (f) = \dfrac{(\frac{s_i}{2})^{1-\alpha}}{\Gamma(1-\alpha)}
\dfrac{f^{(2n+1)}\left(\frac{s_i}{2} (\eta +1)\right)}{(2n+1)!}
\, \dfrac{2^{2n} (n!)^2}{(2n+1-\alpha)}
\, \left(\dfrac{\Gamma (n+1-\alpha)}{\Gamma (2n+1-\alpha)}\right)^2.
$$
Analogous formulas hold for the right Liouville--Caputo fractional derivative.
Next we present two methods for computing the nodes and weights 
of the fractional Gauss--Jacobi quadrature rule \eqref{eq3.19}.
The first one is based on Golub--Welsch algorithm, while the second
is a recent method introduced by Hale and Townsend \cite{sal13},
which is based on Newton's iteration for finding roots
and a good asymptotic formula for the weights.


\section{Two methods for calculating nodes and weights}
\label{sec4}

Pang et al. \cite{sal1} use Gauss--Jacobi and Gauss--Jacobi--Lobatto quadrature
rules for computing fractional directional integrals, together with 
the Golub--Welsch (GW) algorithm for computing nodes and weights 
of quadrature rules. The GW algorithm exploits the three-term recurrence 
relations \eqref{eq3.4} satisfied by all real orthogonal polynomials.
This relation gives rise to a symmetric tridiagonal matrix,
\begin{equation*}
x\left[
\begin{array}{c}
{J}_0^{\lambda,\nu} (x) \\
{J}_1^{\lambda,\nu} (x) \\
\vdots \\
{J}_{N-2}^{\lambda,\nu} (x)\\
{J}_{N-1}^{\lambda,\nu} (x)
\end{array}\right]
=\left[ \begin{array}{ccccc}
A_1 & B_1 & 0 & \cdots & 0\\
B_1 & A_2 & B_2 & \cdots & 0 \\
\vdots & \ddots & \ddots & \ddots & \vdots \\
0 & \cdots & \ddots & A_{N-1} & B_{N-1}\\
0 & \cdots & 0  & B_{N-1} & A_N\\
\end{array}
\right] \,
\left[ \begin{array}{c}
{J}_0^{\lambda,\nu} (x)\\
{J}_1^{\lambda,\nu} (x)\\
\vdots \\
{J}_{N-2}^{\lambda,\nu} (x)\\
{J}_{N-1}^{\lambda,\nu} (x)
\end{array}\right]+
\left[  \begin{array}{c}
0\\
0\\
\vdots\\
0\\
B_N{J}_{N}^{\lambda,\nu} (x)
\end{array}\right] ,
\end{equation*}
where
\begin{equation*}
A_n=\frac{-b_n}{a_n},\quad B_n=\sqrt{\frac{c_{n+1}}{a_n a_{n+1}}},
\quad {J}_{n}^{\lambda,\nu}(x)=\dfrac{{J}_{n}^{\lambda,\nu}}{\sqrt{c_{n,\lambda,\nu}}},
\end{equation*}
in which
\begin{equation*}
a_n = \left\{
\begin{array}{ll}
\dfrac{(2n+\lambda+\nu-1) (2n+\lambda+\nu)}{2n (n+\lambda+\nu)},
& \lambda+\nu \neq -1,\\
\dfrac{2n-1}{n}, &  n\geq 2, \lambda+\nu =-1,\\
\dfrac{1}{2}, & n=1, \lambda+\nu=-1,
\end{array} \right.
\end{equation*}
\begin{equation*}
b_n = \left\{
\begin{array}{ll}
-\dfrac{(2n+\lambda+\nu+1) (\lambda^2-\nu^2)}{2n (n+\lambda+\nu) (2n+\lambda+\nu -2)},
& \lambda+\nu \neq -1,\\
\,&\,\\
\dfrac{\nu^2-\lambda^2}{n(2n-3)}, & n\geq 2, \lambda+\nu=-1,\\
\,&\,\\
\dfrac{\nu^2-\lambda^2}{2}, & n=1, \lambda+\nu=-1,
\end{array} \right.
\end{equation*}
\begin{equation*}
c_n=\dfrac{(n+\lambda-1) (n+\nu-1) (2n+\lambda+\nu)}{n(n+\lambda+\nu) 
(2n+\lambda+\nu-2)}, \, n\geq2,
\end{equation*}
$$
c_{n,\lambda,\nu}=\partial^{\lambda,\nu}.
$$
It is easy to prove that $x_i$ is the zero of
$P_n^{(\lambda,\nu)}$ if and only if $x_i$
is the eigenvalue of the tridiagonal matrix \cite{sal1}.
Moreover, Golub and Welcsh proved that the weights of quadrature
are the first component of corresponding eigenvectors \cite{sal11}.
This algorithm takes $O(n^2)$ operations to solve the eigenvalue problem
by taking advantage of the structure of the matrix and noting that only
the first component of the normalized eigenvector need to be computed.
An alternative approach for computing the nodes and weights of the Gauss--Jacobi rule
is to use the same three-term recurrence relation in order to compute Newton's iterates,
which converge to the zeros of the orthogonal polynomial \cite{sal18}.
Since the recurrence requires $O(n)$ operations for each evaluation of the polynomial
and its derivative, we have the total complexity of order $O(n^2)$ 
for all nodes and weights.
Furthermore, the relative maximum error in the weights is of order $O(n)$
and for the nodes is independent of $n$. Here we develop the new technique
introduced by Hale and Townsend, which utilizes asymptotic formulas
for both accurate initial guesses of the roots and efficient evaluation
of the fractional Jacobi polynomial $f_n^{\alpha-1}$ inside Newton's method \cite{sal13}.
With this method it is possible to compute the nodes and weights
of the fractional Gauss--Jacobi rule in just $O(n)$ operations
and almost full double precision of order $O(1)$. For performance of the method,
first put $\theta_k=\cos^{-1}x_k$ to avoid the existing clustering in computing
$w_k$, because of presence of the term $(1-x_k^2)$ in the denominator of
\eqref{eq3.26} that lead to cancellation error for $x_k = \pm 1$. 
Then, using a simple method, like the bisection one, 
for finding $\theta_k^{[0]}$ as the initial guess of the $k$th root,
we construct the Newton iterates for finding the nodes as follows:
\begin{equation*}
\theta_k^{[j+1]}=\theta_k^{[j]}-\frac{f_n^{\alpha-1}\left(
\cos\theta_k^{[j]}\right)}{\left[-\sin \theta_k^{[j]}
\frac{d}{d\theta}f_n^{\alpha-1} (\cos \theta_k^{[j]}) \right]},
\quad j=0,1,2,\ldots
\end{equation*}
Once the iterates have converged, the nodes are given by
$x_k=\cos\theta_k$ and using \eqref{eq3.26} the weights are given by
\begin{equation*}
l_k=\left( \frac{d}{d\theta} f_n (\cos \theta_k)\right)^{-2}.
\end{equation*}
Since the zeros of the orthogonal fractional Jacobi polynomial are simple \cite{sal17},
we conclude that the Newton iterates converge quadratically \cite{sal22}. Furthermore,
since all fractional Jacobi polynomials satisfy the relation
\eqref{eq3.5}, we just need to consider all calculations for
$x\in [0,1]$, i.e., $\theta \in [0,\frac{\pi}{2}]$. There are three asymptotic
formulas for finding the nodes.
\begin{enumerate}
\item[1)] One is given by Gatteschi and Pittaluga \cite{sal19}
for Jacobi polynomials and here to the case of fractional Jacobi polynomials:
\begin{equation*}
\tilde{x}_k=\cos\left[f_k+\frac{1}{4\rho^2} \left( \frac{1}{4}
- (\alpha-1)^2\right) \cot\left(\frac{f_k}{2}\right)
-\frac{1}{4}\tan\frac{f_k}{2}\right]+O(n^{-4}),
\end{equation*}
$|\alpha|\leq \frac{1}{2}$, where $\rho=n+\frac{\alpha}{2}$ and
$f_k=(k+\frac{\alpha}{2}-\frac{3}{4})\frac{\pi}{\rho}$.

\item[2)] Let $j_{\alpha-1,k}$ denote the $k$th root of Bessel's function
$J_{\alpha-1}(z)$. Then the approximation given by Gatteschi and Pittaluga \cite{sal19}
for the nodes near $x=1$ becomes
\begin{gather*}
\tilde{x_k}=\cos \left[\dfrac{j_{\alpha-1,k}}{\gamma}\left(1
-\dfrac{4-(\alpha-1)^2}{720 \gamma^4}\right)\left(\frac{ j_{\alpha-1, k}^2}{2}
+\alpha^2 -2\alpha\right)\right]\\
+j_{\alpha-1,k}^5 O(n^{-7}),
\end{gather*}
$\alpha \in [-\frac{1}{2}, \frac{1}{2}]$, where
$\gamma=\sqrt{\rho^2+(2-\alpha^2-2\alpha)/12}$.

\item[3)] Other asymptotic formulas for the nodes near
$x=1$ are given by Olver et al. \cite{sal20}:
\begin{gather*}
\tilde{x}_k=\cos \left[\psi_k+ (\alpha^2-2\alpha+3/4)\,
\dfrac{\psi_k \cot (\psi_k)-1}{2\rho^2 \psi_k}
- (\alpha-1)^2 \dfrac{\tan (\frac{\psi_k}{2})}{4\rho^2}\right]\\
+ j_{\alpha-1 ,k}^2 O (n^5),
\end{gather*}
$\alpha>-\frac{1}{2}$, where $\psi_k=\dfrac{j_{\alpha-1 ,k}}{\rho}$.
\end{enumerate}
For computing the weights $l_k$ we just need to evaluate
$\dfrac{d}{d\theta} f_n^{\alpha-1} (\cos \theta)$
in the $\theta_k$. To this end, we use relation \eqref{equ3.5}
between $f_n^{\alpha-1}$ and $(f_n^{\alpha-1})'$.
So we just need to compute the value of $ f_n^{\alpha-1}$ at
$x = \cos\theta$. This work is done by using an asymptotic formula introduced in
\cite{sal13}, which takes the following form for fractional Jacobi polynomials:
\begin{multline*}
\sin^{\alpha-1/2}(\theta/2)\cos^{1/2}(\theta/2)f_n^{\alpha-1} (\cos \theta)\\
=\dfrac{2^{2\rho} B (n+\alpha,n+1)}{\pi}\sum_{m=0}^{M-1}
\dfrac{f_m(\theta)}{2^m (2\rho+1)_m}+V_{M,n}^{\alpha-1}(\theta),
\end{multline*}
where $\rho=n+\alpha/2$, $B(\alpha,\beta)$ is the Beta function,
\begin{equation*}
f_m(\theta)=\sum_{l=0}^m \dfrac{c_{m,l}^{\alpha-1}}{l! (m-l)!}
\,\dfrac{\cos (\theta_{n,m,l})}{\sin^l (\theta/2) \cos ^{m-l} (\theta/2)},
\end{equation*}
\begin{equation*}
\theta_{n,m,l}=\frac{1}{2} (2\rho+m)\theta -\frac{1}{2}(\alpha+l-a/2)\pi
\end{equation*}
and
\begin{equation*}
C_{m,l}^\alpha= \left(\alpha-\frac{1}{2}\right)_l \left(\frac{3}{2}
-\alpha\right)_l \left(\frac{1}{4}\right)_{m-l},
\end{equation*}
where for $\alpha\in(-1/2,1/2)$ the error term $V_{M,n}$
is less than twice the magnitude of the first neglected term.
We give two examples illustrating the usefulness of the method.

\begin{example}
\label{exa4.1}
The fractional integral of the function $f(t)=\sin(t)$ is defined as
\begin{equation*}
{_0I_t}^\alpha \sin(t)=\frac{1}{\Gamma(\alpha)}\int_{0}^{t}(t-x)^{\alpha-1} \sin(x)dx,
\quad t\in[0,2\pi],\quad \alpha\in(0,1).
\end{equation*}
The explicit expression of the integral, presented in Table 9.1 of \cite{sal21}, is
\begin{equation*}
{_0I_t}^\alpha \sin(t)=\frac{t^\alpha}{2i\Gamma(\alpha+1)}[_1F_1(1;\alpha+1;it)
- {_1F_1}(1;\alpha+1;-it)],
\end{equation*}
where $i=\sqrt{-1}$ and $_1F_1$ is the generalized hyper-geometric function defined by
\begin{equation*}
_1F_1(p;q;z)=\sum_{k=0}^{\infty}\frac{(p)_k z}{(q)_k}.
\end{equation*}
Choosing the test points in the set $\{s_k=\frac{k\pi}{8}, k=0,1,\ldots,16\}$,
the error is given by
\begin{equation}
\label{eq5.4}
\sqrt{\frac{\sum_{k=0}^{16} (E_k-A_k)^2}{\sum_{k=0}^{16} E_k}},
\end{equation}
where $E_k$ is the exact result at $s_k$ and $A_k$ is the approximated result by our method.
The values of \eqref{eq5.4} for $\alpha=0.25$, $0.5$ and $0.75$ are listed in the Table~\ref{Table 1}.
We used the command \texttt{jacpts} of the \texttt{chebfun} software \cite{sal13}
for finding the nodes and weights of the fractional Gauss--Jacobi quadrature (\texttt{fgjq}) rule.
The errors show that the method is more accurate than the method introduced in \cite{sal1}.
\begin{table}[ht]
\caption{The errors obtained for Example \ref{exa4.1} from \eqref{eq5.4},
with $\alpha= 0.25, 0.5, 0.75$ and  $n=5,6,7,8,16$ in \eqref{eq3.25}.}
\label{Table 1}
\begin{tabular}{|c|c|c|c|c|c|c|c|c|c|c|c|}
\hline
$\alpha$ & $n=5$&$n=6$&$n=7$&$n=8$&$n=16$\\\hline
$0.25$ & $3.22\times10^{-6}$&$5.14\times10^{-8}$
&$6.1\times10^{-10}$& $5.58\times10^{-12}$&$2.81\times10^{-15}$\\
$0.5$ & $4.85\times10^{-6}$ & $7.75\times10^{-8}$ & $9.18\times10^{-10}$
& $8.37\times10^{-12}$ & $7.12\times10^{-16}$ \\
$0.75$ & $5.35\times10^{-6}$ & $8.35\times10^{-8}$
& $9.65\times10^{-10}$ & $8.6\times10^{-12}$ & $1.39\times10^{-15}$ \\\hline
\end{tabular}
\end{table}
\end{example}

\begin{example}
\label{exam:5.2.}
In this example we consider the function $f(t)=t^4$. The Liouville--Caputo
fractional derivative of $f$ of order $0<\alpha<1$ is defined as
\begin{equation*}
_0^CD_t ^{\alpha}t^4=\frac{1}{\Gamma(1-\alpha)}
\int_{0}^{t}4(t-x)^{1-\alpha}t^3dx,\quad \alpha\in(0,1).
\end{equation*}
The explicit form of the result is
\begin{equation*}
_0^CD_t ^{\alpha} t^4=\frac{\Gamma(5)}{\Gamma(4.5)}t^{3.5}
\end{equation*}
(see \cite{sal4}). Using \eqref{eq3.28}, we can write
\begin{equation*}
_0^CD_{s_i} ^{\alpha}t^4\simeq \dfrac{(\frac{s_i}{2})^{1-\alpha}}{\Gamma (1-\alpha)}
\sum_{k=1}^n 4 l_k \left(\frac{s_i}{2} (x_k+1)\right)^3.
\end{equation*}
Here the nodes and weights coincide with the nodes and weights of Example~\ref{exa4.1}
for $\alpha=0.5$. With the test point set $\{s_k=\frac{k}{10}, k=0,1,2,\ldots,10\}$
the error defined by \eqref{eq5.4} is listed in Table~\ref{Table 2}. Looking to Table~\ref{Table 2},
one can see that the best approximation is obtained for $n=2$
and that by increasing $n$ we do not get better results.
\begin{table}[ht]
\caption{The errors obtained for Example~\ref{exam:5.2.} from \eqref{eq5.4}
with $n=2,3,\ldots,7$ in \eqref{eq3.28} for approximating the Liouville--Caputo 
fractional derivative of order $\alpha= 0.5$ of function $f(t)=t^4$.}
\label{Table 2}
\begin{tabular}{|c|c|c|c|c|c|c|c|c|c|c|} \hline
$n=2$&$n=3$&$n=4$&$n=5$&$n=6$&$n=7$\\\hline
$4.0\times10^{-17}$ & $8.8\times10^{-16}$ & $1.9\times10^{-16}$
& $6.4\times10^{-16}$ & $3.8\times10^{-16}$ & $3.8\times10^{-16}$\\\hline
\end{tabular}
\end{table}
\end{example}


\section{Application to fractional ordinary differential equations}
\label{sec5}

Consider the fractional initial value problem
\begin{equation}
\label{eq6.1}
\left\{
\begin{array}{ll}
_0^CD_t^\alpha y(t)=g(t,y(t)),
& n-1<\alpha<n,\quad n\in\mathbb{N},\quad 0<t<b,\\
y^{(k)}(0)=b_k, & k=0,1,\ldots,n-1,
\end{array} \right.
\end{equation}
where $y(t)$ is the unknown function, $g:\mathbb{R}^2\to\mathbb{R}$
is given and $b_k$, $k=0,1,\ldots,n-1$, are real constants.
The following result establishes existence and uniqueness
of solution for \eqref{eq6.1}.

\begin{theorem}[See \cite{MR2218073,sal15}]
\label{thm:6.1}
Consider the fractional differential equation \eqref{eq6.1}.
Let $K>0$, $h^*>0$, $b_0,b_1,\ldots,b_{n-1}\in\mathbb{R}$,
$G:=[0,h^*]\times[b_0-K,b_0+K]$,
and function $g:G\to\mathbb{R}$ be continuous.
Then, there exists $h > 0$ and a function
$y\in C[0, h]$ solving the Liouville--Caputo fractional initial
value problem \eqref{eq6.1}. If $0<\alpha<1$,
then the parameter $h$ is given by
$$
h:=\min\left\{h^*,\left(\frac{K\Gamma(\alpha+1)}{M}\right)^{1/\alpha}\right\},
\quad M:=\sup_{(t,z)\in G}|g(t,z)|.
$$
Furthermore, if $g$ fulfils a Lipschitz condition with respect to the second variable,
that is,
$$
|g(t,y_1)-g(t,y_2)|\leq L(y_1-y_2)
$$
for some constant $L > 0$ independent of $t, y_1$ and $y_2$,
then the function $y\in C[0, h]$ is unique.
\end{theorem}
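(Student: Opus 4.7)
The plan is to recast the fractional differential equation as an equivalent Volterra integral equation and then apply classical fixed-point theorems. First I would use the inversion property $({_0I_t^\alpha} {_0^CD_t^\alpha} y)(t) = y(t) - \sum_{k=0}^{n-1} y^{(k)}(0)\,t^k/k!$ (stated in Section~\ref{sec2}) together with the initial conditions to show that $y\in C[0,h]$ solves \eqref{eq6.1} if and only if it satisfies
\begin{equation*}
y(t) = \sum_{k=0}^{n-1} b_k\,\frac{t^k}{k!} + \frac{1}{\Gamma(\alpha)}\int_0^t (t-s)^{\alpha-1} g(s,y(s))\,ds.
\end{equation*}
This reduces the problem to finding a fixed point of the operator $T$ defined by the right-hand side on a suitable complete metric space.

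For the existence part (restricting attention to $0<\alpha<1$, since this is the range for which an explicit formula for $h$ is asserted), I would work in the closed ball $B := \{y \in C[0,h] : \|y-b_0\|_\infty \leq K\}$ endowed with the sup-norm. I would first show that $T(B)\subset B$ by estimating
\begin{equation*}
|(Ty)(t)-b_0| \leq \frac{1}{\Gamma(\alpha)}\int_0^t (t-s)^{\alpha-1}|g(s,y(s))|\,ds \leq \frac{M\,t^\alpha}{\Gamma(\alpha+1)},
\end{equation*}
which is bounded by $K$ precisely when $t^\alpha \leq K\Gamma(\alpha+1)/M$, yielding exactly the prescribed value of $h$. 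Next I would verify that $T(B)$ is equicontinuous and uniformly bounded—the equicontinuity requires a careful splitting of the integral to cope with the weak singularity $(t-s)^{\alpha-1}$, but since $\alpha>0$ the integrand is still integrable and the standard continuity-of-the-integral argument applies. Continuity of $T$ on $B$ follows from uniform continuity of $g$ on the compact set $G$. Schauder's fixed-point theorem (combined with Arzelà--Ascoli for relative compactness of $T(B)$) then delivers a fixed point $y\in B$, which is the desired solution.

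For uniqueness under the Lipschitz hypothesis, I would apply the Banach contraction principle. A direct estimate with the sup-norm gives
\begin{equation*}
\|Ty_1 - Ty_2\|_\infty \leq \frac{L h^\alpha}{\Gamma(\alpha+1)}\|y_1-y_2\|_\infty,
\end{equation*}
which is a contraction only when $h$ is small. To obtain uniqueness on the full interval $[0,h]$ given by the existence statement, I would either iterate this local uniqueness argument step by step, or more elegantly replace the sup-norm with a Bielecki-type weighted norm $\|y\|_\lambda = \sup_{t\in[0,h]} e^{-\lambda t}|y(t)|$ and choose $\lambda$ large enough that $T$ becomes a contraction on the whole interval. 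Alternatively, a Gronwall-type inequality for the difference of two solutions provides the same conclusion.

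The main obstacle I expect is the weakly singular kernel $(t-s)^{\alpha-1}$: it forces more delicate estimates than the classical Picard--Lindelöf setting, both when establishing equicontinuity of $T(B)$ (splitting the integral near $s=t$) and when extending local uniqueness to the full interval (where the naïve Lipschitz estimate does not contract). Choosing the right function-space norm—or equivalently, invoking the Gronwall inequality tailored to Volterra equations with singular kernels—is the key technical point.
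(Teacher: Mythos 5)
Your proposal is correct and follows essentially the same route as the source the paper relies on: Theorem~\ref{thm:6.1} is stated in the paper without proof, by citation to \cite{MR2218073,sal15}, where the standard argument is precisely your reduction of \eqref{eq6.1} to the Volterra integral equation, Schauder's fixed point theorem (via Arzel\`a--Ascoli) on the ball $\{y\in C[0,h]:\|y-b_0\|_\infty\le K\}$ for existence with exactly that value of $h$, and a fixed-point/contraction argument for uniqueness under the Lipschitz condition. Your Bielecki-weighted-norm (or Gronwall) treatment of uniqueness on the whole interval is only a minor variant of the iterate estimate (Weissinger-type) used in those references, so there is nothing essentially different to compare.
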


In order to solve problem \eqref{eq6.1} using our method, we need the following theorem.

\begin{theorem}
\label{thm:6.2}
Under the assumptions of Theorem~\ref{thm:6.1},
function $y\in C[0, h]$ is a solution to the Liouville--Caputo
fractional differential equation \eqref{eq6.1}
if and only if it is a solution to the Volterra
integral equation of second kind
\begin{equation*}
y(t)=\sum_{k=0}^{n-1}\frac{t^k}{k!}b_k
+\frac{1}{\Gamma(\alpha)}\int_0^t(t-x)^{\alpha-1}g(x,y(x))dx.
\end{equation*}
\end{theorem}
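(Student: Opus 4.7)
The plan is to invert the fractional differential equation into Volterra form and back, by applying respectively the Riemann--Liouville integral $_0I_t^\alpha$ and the Liouville--Caputo derivative $_0^CD_t^\alpha$ to the two sides, using the composition identities recorded in Section~\ref{sec2}. For the forward implication, suppose $y\in C[0,h]$ solves \eqref{eq6.1}. I would apply $_0I_t^\alpha$ to both sides of $_0^CD_t^\alpha y(t)=g(t,y(t))$. The identity $({_0I_t^\alpha}\,{_0^CD_t^\alpha}\,y)(t)=y(t)-\sum_{k=0}^{n-1}y^{(k)}(0^{+})t^k/k!$ converts the left-hand side into $y(t)-\sum_{k=0}^{n-1}b_k t^k/k!$ once the initial data $y^{(k)}(0)=b_k$ are substituted, and rearranging yields the Volterra equation. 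Continuity of $g$ on the compact set $G$ and of $y$ on $[0,h]$ ensures that the Riemann--Liouville integral of $g(\cdot,y(\cdot))$ is well-defined.

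For the reverse implication, I would assume $y\in C[0,h]$ satisfies the Volterra equation and apply $_0^CD_t^\alpha$ to both sides. The polynomial $\sum_{k=0}^{n-1}b_k t^k/k!$ has vanishing $n$-th ordinary derivative, so its Liouville--Caputo derivative is zero. The integral term is exactly $_0I_t^\alpha g(\cdot,y(\cdot))$, and the identity $({_0^CD_t^\alpha}\,{_0I_t^\alpha}\,f)(t)=f(t)$ returns $g(t,y(t))$, producing the fractional ODE. To recover the $n$ initial conditions, I would differentiate the Volterra equation $k$ times and evaluate at $t=0$ for $k=0,\ldots,n-1$: the polynomial part contributes $b_k$, while for the integral part the semigroup property $_0I_t^\alpha=\,_0I_t^k\,_0I_t^{\alpha-k}$ shows that its $k$-th classical derivative equals $_0I_t^{\alpha-k}g(\cdot,y(\cdot))$, which tends to $0$ as $t\to 0^{+}$ because $\alpha-k>0$ and the integrand is bounded.

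The main technical obstacle is this last step: justifying differentiation under the singular kernel $(t-x)^{\alpha-1}$ and verifying all $n$ initial conditions one by one. Once the semigroup property for Riemann--Liouville integrals and the vanishing of $_0I_t^{\alpha-k}$ at the lower endpoint are invoked, the remainder of the proof reduces to direct applications of the two composition identities already stated in Section~\ref{sec2}. The regularity needed for these manipulations follows from the continuity hypotheses carried over from Theorem~\ref{thm:6.1}, so no additional assumptions on $g$ or $y$ are required beyond what is already in force.
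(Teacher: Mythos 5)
Your proposal is correct, but it follows a genuinely different route from the paper. The paper's proof is a formal Laplace-transform argument: it applies $\mathcal{L}$ to \eqref{eq6.1} using $\mathcal{L}\{{_0^CD_t^\alpha}y(t)\}=s^\alpha Y(s)-\sum_{k=0}^{n-1}s^{\alpha-k-1}y^{(k)}(0)$, solves algebraically for $Y(s)$, and inverts via the convolution theorem and $\mathcal{L}\{t^{\alpha-1}\}=\Gamma(\alpha)/s^\alpha$; as written it really only exhibits the passage from the differential equation to the Volterra equation, leaving the converse implicit. You instead work directly in the time domain with the two composition identities recorded in Section~\ref{sec2}, applying ${_0I_t^\alpha}$ for the forward direction and ${_0^CD_t^\alpha}$ for the converse, and you recover the $n$ initial conditions explicitly by differentiating the Volterra equation and using $\frac{d^k}{dt^k}\,{_0I_t^\alpha}={_0I_t^{\alpha-k}}$ together with the vanishing of ${_0I_t^{\alpha-k}}$ at $t=0^{+}$ for $\alpha-k>0$. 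What your approach buys: it treats both implications of the equivalence, it stays on the finite interval $[0,h]$ and so avoids the tacit assumptions of the Laplace method (existence of $\mathcal{L}\{g(t,y(t))\}$, i.e.\ an extension of the solution to $[0,\infty)$ with suitable growth, and uniqueness of the inverse transform), and it is closer to the standard proofs in the cited monographs. What the paper's route buys is brevity and a purely algebraic manipulation in the $s$-domain. One caveat, at the same level of rigor as the paper: in your converse direction the identity $({_0^CD_t^\alpha}\,{_0I_t^\alpha}f)(t)=f(t)$ for merely continuous $f$ presupposes that ${_0I_t^\alpha}f$ is regular enough for the Caputo derivative in the paper's definition to exist; since Section~\ref{sec2} states this identity without further hypotheses, invoking it as you do is consistent with the paper, but a fully self-contained proof would add a word on this regularity (or use the Riemann--Liouville form of the Caputo derivative).
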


\begin{proof}
Using the Laplace transform formula
for the Liouville--Caputo fractional derivative,
\begin{equation*}
\mathcal{L}\{{_0^CD_t^\alpha} y(t)\}
= s^\alpha Y(s)-\sum_{k=0}^{n-1} s^{\alpha-k-1}y^{(k)}(0)
\end{equation*}
(see \cite{MR2218073,sal15}). Thus, using \eqref{eq6.1}, we can write that
\begin{equation*}
s^\alpha Y(s)-\sum_{k=0}^{n-1}s^{\alpha-k-1}y^{(k)}(0)=G(s,y(s))
\end{equation*}
or
\begin{equation*}
Y(s)=s^{-\alpha}G(s,y(s))+\sum_{k=0}^{n-1}s^{-k-1}b_k,
\end{equation*}
where $G(s,y(s))= \mathcal{L}\{g(t,y(t))\}$. 
Applying the inverse Laplace transform gives
$$
y(t)=\sum_{k=0}^{n-1}\frac{t^k}{k!}b_k
+\frac{1}{\Gamma(\alpha)}\int_0^t(t-x)^{\alpha-1}g(x,y(x))dx,
$$
where we used the relations
\begin{equation*}
\mathcal{L}\{_0I_t^\alpha y(t)\} = \mathcal{L}\left\{\frac{1}{\Gamma(\alpha)}
\int_0^t(t-x)^{\alpha-1}y(x)dx\right\}
=\mathcal{L}\left\{\frac{t^{\alpha-1}}{\Gamma(\alpha)}
*y(t)\right\}=\frac{Y(s)}{s^\alpha}
\end{equation*}
and
$$
\mathcal{L}\{t^{\alpha-1}\}=\frac{\Gamma(\alpha)}{s^\alpha}.
$$
The proof is complete.
\end{proof}

Now, using \eqref{eq3.24}, \eqref{eq3.25} and Theorem~\ref{thm:6.2}, we have
\begin{equation*}
\begin{split}
y(t)&=\sum_{k=0}^{n-1}\frac{t^k}{k!}b_k
+\frac{1}{\Gamma(\alpha)}\int_0^t(t-x)^{\alpha-1}g(x,y(x))dx\\
&=\sum_{k=0}^{n-1}\frac{t^k}{k!}b_k+\frac{1}{\Gamma(\alpha)}\left(
\frac{t}{2}\right)^\alpha\int_{-1}^1(1-s)^{\alpha-1}\tilde{g} (s,\tilde{y}(s))ds,
\end{split}
\end{equation*}
where
$$
\tilde{g}(s,\tilde{y}(s))=g\left(\frac{t}{2}(1+s),
y\left(\frac{t}{2}(1+s)\right)\right),
\quad -1\leq s\leq 1;
$$
$$
\tilde{y}(s)=y\left(\frac{t}{2}(1+s)\right),\quad -1\leq s\leq 1.
$$
So, using the fractional Gauss--Jacobi quadrature rule introduced in Section~\ref{sec2},
we can approximate $y(t)$ by the following formula:
\begin{equation}
\label{eq6.7}
y(t)\simeq\sum_{k=0}^{n-1}\frac{t^k}{k!}b_k
+\frac{1}{\Gamma(\alpha)}\left(\frac{t}{2}\right)^\alpha
\sum_{k=1}^N l_k \tilde{g}(x_k,\tilde{y}(x_k)),
\end{equation}
where $l_k$ and $x_k$ are the weights and nodes of our quadrature rule, respectively.
Also, let $t\in[0,1]$ and $t_j=\frac{j}{n}$ for $j=0,1,\ldots,n$. Furthermore,
let the numerical values of $y(t)$ at $t_0,t_1,\ldots,t_n$ have been obtained,
and let these values be $y_0,y_1,\ldots,y_n$, respectively, with $y_0=b_0$.
Now we are going to compute the value of $y(t)$ at $t_{n+1}$, i.e., $y_{n+1}$.
To this end we use the predictor-corrector method introduced
by Diethelm et al. in \cite{sal26}, but where for the prediction part we use
interpolation at equispaced nodes instead of piecewise linear interpolation.
Using first \eqref{eq6.7}, we know that
\begin{equation}
\label{eq10}
y(t_{n+1})\simeq\sum_{k=0}^{n-1}\frac{t_{n+1}^k}{k!}b_k
+\frac{1}{\Gamma(\alpha)}\left(\frac{t_{n+1}}{2}\right)^\alpha\sum_{k=1}^N
l_k g\left(\frac{t_{n+1}}{2}(1+x_k),y\left(\frac{t_{n+1}}{2}(1+x_k)\right)\right),
\end{equation}
where $l_k$ and $x_k$, $k=1,2,\ldots,N$, are the weights and nodes of
the fractional Gauss--Jacobi quadrature rule introduced in Section~\ref{sec3}.
Looking to \eqref{eq10} we see that for calculating the value of $y(t_{n+1})$
we need to know the value of $g$ at point
$\left(\frac{t_{n+1}}{2}(1+x_k),y\left(\frac{t_{n+1}}{2}(1+x_k)\right)\right)$.
For obtaining these values we act as follows.
First, using the predictor-corrector method introduced in \cite{sal28},
we calculate the values of $y_1(t_i)$, $i=0,1,\ldots,n+1$,
using the following formulas:
\begin{equation*}
y_1(t_{n+1})= \left\{
\begin{array}{ll}
y_0+\frac{k^\alpha}{\Gamma(\alpha+2)}\left(g(t_1,y^{pr}(t_1))
+\alpha g(t_0,y_0)\right), & n=0,\\
y_0+\frac{k^\alpha}{\Gamma(\alpha+2)}\left(g(t_{n+1},
y^{pr}(t_{n+1}))+(2^{\alpha+1}-2) g(t_n,y(t_n))\right)\\
+\frac{k^\alpha}{\Gamma(\alpha+2)}
\sum_{j=0}^{n-1} a_j g(t_j,y^{pr}(t_j)), &  n\geq 1,
\end{array} \right.
\end{equation*}
where
\begin{equation*}
a_j= \left\{
\begin{array}{ll}
n^{\alpha+1}-(n-\alpha)(n+1)^\alpha, & j=0,\\
(n-j+2)^{\alpha+1}+(n-j)^{\alpha+1}-2(n-j+1)^{\alpha+1},
&  1\leq j \leq n,\\
1, & j=n+1
\end{array} \right.
\end{equation*}
and
\begin{equation*}
y^{pr}(t_{n+1})= \left\{
\begin{array}{ll}
y_0+\frac{k^\alpha}{\Gamma(\alpha+1)}g(t_0,y_0), & n=0,\\
y_0+\frac{k^\alpha}{\Gamma(\alpha+2)} \cdot (2^{\alpha h+1}-1) \cdot g(t_n,y(t_n))\\
+\frac{k^\alpha}{\Gamma(\alpha+2)}\sum_{j=0}^{n-1}a_j g(t_j,y(t_j)), &  n\geq1
\end{array} \right.
\end{equation*}
and $k$ is the step length, i.e., $k=t_{j+1}-t_j$ (for more details, see \cite{sal28}).
Finally, if $P_n$ is the interpolating polynomial passing through $\{(t_i,y1(t_i)\}_{i=0}^{n+1}$,
then we have the following formula for calculating $y(t_{n+1})$:
\begin{equation}
\label{eq6.12}
y(t_{n+1})\simeq\sum_{k=0}^{n-1}\frac{t_{n+1}^k}{k!}b_k
+\frac{1}{\Gamma(\alpha)}\left(\frac{t_{n+1}}{2}\right)^\alpha
\sum_{k=1}^N l_k g\left(\frac{t_{n+1}}{2}(1+x_k),
P_n\left(\frac{t_{n+1}}{2}(1+x_k)\right)\right).
\end{equation}

We solve two examples illustrating the applicability of our method.

\begin{example}
\label{exam5.3}
Consider the nonlinear ordinary differential equation
\begin{equation*}
{_0^CD_t ^{\alpha}}y(t)+y^2(t)=f(t)
\end{equation*}
subject to $y(0)=0$ and $y'(0)=0$, where
$$
f(t)=\frac{120t^{5-\alpha}}{\Gamma(6-\alpha)}-\frac{72t^{4-\alpha}}{\Gamma(5-\alpha)}
+\frac{12t^{3-\alpha}}{\Gamma(4-\alpha)}+k(t^5-3t^4+2t^3)^2.
$$
Following \cite{sal29}, we take $k=1$
and $\alpha=\frac{3}{2}$. The exact solution is then given by
$$
y(t)=(t^5-3t^4+2t^3)^2.
$$
Figure~\ref{Fig:6.1} plots the results obtained for $N=16$ in \eqref{eq6.12}.
\begin{figure}
\includegraphics[scale=.75]{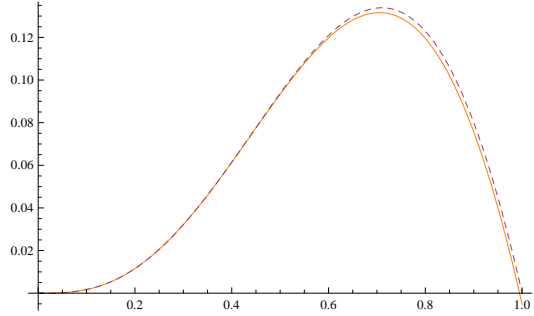}
\caption{The exact solution (the solid line) versus the approximated solution 
(the dashed line) of the fractional initial value problem of Example~\ref{exam5.3}.}
\label{Fig:6.1}
\end{figure}
\end{example}

\begin{example}
\label{exam5.4}
Consider the fractional oscillation equation
\begin{equation*}
{_0^CD_t ^{\alpha}}y(t)+y(t)=t e ^{-t}
\end{equation*}
subject to initial conditions  $y(0)=0$ and $y'(0)=0$. The exact solution is
$$
y=\int_0^t G(t-x)xe^{-x}dx,
\quad G(t)=t^{\alpha-1} E_{\alpha,\alpha}(t),
$$
where $E_{\alpha,\beta}(t)$ is the generalized Mittag--Lefller 
function \cite{sal29}. Results are shown in Figure~\ref{Fig:6.2} 
for $N=16$ in \eqref{eq6.12} and $\alpha=\frac{3}{2}$.
\begin{figure}
\includegraphics[scale=.75]{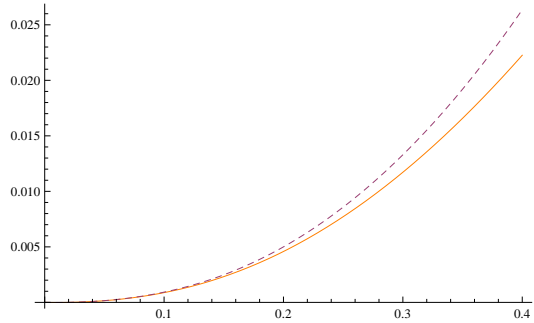}
\caption{The exact solution (the solid line) versus the approximated solution 
(the dashed line) of the fractional initial value problem of Example~\ref{exam5.4}.}
\label{Fig:6.2}
\end{figure}
\end{example}


\section{Application to fractional variational problems}
\label{sec6}

In this section we apply our method to solve, numerically, 
some problems of the calculus of variations of fractional order.
The calculus of variations is a rich branch of classical mathematics 
dealing with the optimization of physical quantities 
(such as time, area, or distance). It has applications in many
diverse fields, including aeronautics (maximizing the lift of an aircraft wing), 
sporting equipment design (minimizing air resistance on a bicycle helmet 
or optimizing the shape of a ski), mechanical engineering 
(maximizing the strength of a column, a dam, or an arch), 
boat design (optimizing the shape of a boat hull) and physics (calculating trajectories
and geodesics, in both classical mechanics and general relativity) \cite{sal32}.
The problem of the calculus of variations of fractional order is more recent
(see \cite{sal34,sal35,sal33} and references therein) and consists, typically,
to find a function $y$ that is a minimizer of a functional
\begin{equation}
\label{eq7.1}
J[y]=\int_{a}^{b}L\left(t,y(t),(_0^CD_t ^{\alpha})y(t)\right)dt,
\quad \alpha\in(n-1,n),
\quad n\in \mathbb{N},
\end{equation}
subject to boundary conditions
$$
y(a)=u_a,\quad y(b)=u_b.
$$
One can deduce fractional necessary optimality equations to problem \eqref{eq7.1}
of Euler--Lagrange type: if function $y$ is a solution to problem \eqref{eq7.1},
then it satisfies the fractional Euler--Lagrange differential equation
\begin{equation}
\label{Euler}
\frac{\partial L}{\partial y} + {_t D_b^\alpha}
\frac{\partial L}{\partial (_0^CD_t ^{\alpha}) }=0.
\end{equation}
It is often hard to find the analytic solution to problems of the calculus of variations
of fractional order by solving \eqref{Euler}. Therefore, 
it is natural to use some numerical method to find approximations 
of the solution of \eqref{Euler}. Here we are going to apply our method
for solving some examples of such fractional variational problems. To this end,
at first we find a class of fractional Lagrangians
\begin{equation*}
L\left(t,y(t),(_0^CD_t ^{\alpha})y(t)\right),
\quad 1<\alpha <2,
\end{equation*}
such that the corresponding Euler--Lagrange equation \eqref{Euler} takes the form
\begin{equation}
\label{eq7.2}
\frac{\partial L}{\partial y}+ {_t D_b^\alpha} \frac{\partial L}{\partial (_0^CD_t ^{\alpha})}
={_t D_b^\alpha}\left((_0^CD_t ^{\alpha})y(t)\right)+g(t,y(t)) = 0.
\end{equation}
We assume a solution to this problem as follows:
\begin{equation}
\label{eq7.3}
L\left(t,y(t),(_0^CD_t ^{\alpha})y(t)\right)
=\frac{1}{2}\left((_0^CD_t ^{\alpha})y(t)\right)^2+f(t,y(t)).
\end{equation}
Then, we evaluate the left-hand side of \eqref{eq7.2} for \eqref{eq7.3} and we obtain
\begin{equation*}
{_t D_b^\alpha}\left((_0^CD_t ^{\alpha})y(t)\right)
+\frac{\partial f(t,y(t))}{\partial y} = 0.
\end{equation*}
So, if the Lagrangian of the variational problem \eqref{eq7.1} is of form \eqref{eq7.3},
then we just need to solve the following boundary value problem of fractional order:
\begin{equation}
\label{eq7.4}
{_t D_b^\alpha}\left((_a^CD_t ^{\alpha})y(t)\right)+g(t,y(t))=0
\end{equation}
subject to the boundary conditions $y(a)=u_a$ and $ y(b)=u_b$, where
\begin{equation}
\label{eq:m:g}
g(t,y(t))=\frac{\partial f(t,y(t))}{\partial y}.
\end{equation}
Now, we apply the operator ${_tI_b^\alpha}$ to both sides of  equation \eqref{eq7.4}.
We get the following boundary value problem of fractional order:
\begin{equation}
\label{eq7.5}
\begin{cases}
(_a^CD_t^\alpha) y(t)=-{_tI_b^\alpha}g(t,y(t)), & 1<\alpha<2, \quad a<t<b,\\
y(a)=u_a, \quad y(b)=u_b.
\end{cases}
\end{equation}

\begin{theorem}
\label{theorem6.1}
Consider the Lagrangian \eqref{eq7.3}
be such that $g(t,y(t))$ given by \eqref{eq:m:g}
is continuous and let  $-{_tI_b^\alpha}g(t,y(t))=h(t,y(t))$.
Then the function $h: [a,b]\times \mathbb{R}\to \mathbb{R}$
is continuous and the problem \eqref{eq7.5}
is equivalent to the integral equation
\begin{multline}
\label{eq7.6}
y(t)=u_a+(u_b-u_a)t+\frac{1}{\Gamma(\alpha)}
\int_0^t (t-s)^{\alpha-1} h(s,y(s))ds\\
-\frac{t}{\Gamma(\alpha)}\int_0^1 (1-s)^{\alpha-1}h(s,y(s))ds.
\end{multline}
\end{theorem}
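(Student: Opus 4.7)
The proof splits into three parts: continuity of $h$, the implication $\eqref{eq7.5}\Rightarrow\eqref{eq7.6}$, and the converse. Note that the integral equation is written with $a=0$ and $b=1$, and I shall proceed under this normalization (as is implicit in the statement).

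Continuity of $h$ follows from continuity of $g$ combined with the standard fact that the right Riemann--Liouville fractional integral maps $C([0,1]\times\mathbb{R})$ into itself: the integrand $(s-t)^{\alpha-1}g(s,y(s))$ is integrable since $\alpha>1$, and uniform continuity of $g$ on compacts together with the bounded kernel yields continuity of $h(t,y)={_tI_1^\alpha}g(t,y)$ on $[0,1]\times\mathbb{R}$.

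For $\eqref{eq7.5}\Rightarrow\eqref{eq7.6}$, the plan is to apply ${_0I_t^\alpha}$ to both sides of the Caputo equation and use the composition identity stated just after the definition of the Caputo derivative. Since $1<\alpha<2$, we have $n=2$, so
\begin{equation*}
({_0I_t^\alpha}\,{_0^CD_t^\alpha}y)(t)=y(t)-y(0)-y'(0)\,t.
\end{equation*}
Using $y(0)=u_a$, this gives
\begin{equation*}
y(t)=u_a+y'(0)\,t+\frac{1}{\Gamma(\alpha)}\int_0^t(t-s)^{\alpha-1}h(s,y(s))\,ds.
\end{equation*}
Now I would use the second boundary condition $y(1)=u_b$ to solve for the unknown constant $y'(0)$, obtaining
\begin{equation*}
y'(0)=u_b-u_a-\frac{1}{\Gamma(\alpha)}\int_0^1(1-s)^{\alpha-1}h(s,y(s))\,ds,
\end{equation*}
and substituting this back produces exactly \eqref{eq7.6}.

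For the converse, I would apply ${_0^CD_t^\alpha}$ to \eqref{eq7.6} term by term. Because $1<\alpha<2$ forces the Caputo derivative to involve only $f''$, every polynomial in $t$ of degree at most one lies in the kernel of ${_0^CD_t^\alpha}$; in particular the affine piece $u_a+(u_b-u_a)t$ and the linear term $-\tfrac{t}{\Gamma(\alpha)}\int_0^1(1-s)^{\alpha-1}h(s,y(s))\,ds$ are both annihilated. The remaining Riemann--Liouville integral contributes $h(t,y(t))={_0^CD_t^\alpha}\,{_0I_t^\alpha}h(t,y(t))$, recovering the differential equation in \eqref{eq7.5}. The boundary conditions are then checked by direct substitution: at $t=0$ both integrals vanish and $y(0)=u_a$; at $t=1$ the two integrals cancel and $y(1)=u_a+(u_b-u_a)=u_b$.

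The only subtle point is the justification of the composition ${_0^CD_t^\alpha}\,{_0I_t^\alpha}h = h$ in the reverse direction; this is the property recorded after the definition of the Caputo operator and requires only $h(\cdot,y(\cdot))\in L_1[0,1]$, which follows from the continuity established in the first step. Everything else is bookkeeping, so I expect no genuine obstacle beyond ensuring that the assumed $1<\alpha<2$ is used exactly where needed (namely, to fix $n=2$ in the composition identity and to annihilate the affine ansatz).
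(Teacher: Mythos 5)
Your argument is correct, but it is genuinely different in character from what the paper does: the paper's ``proof'' of Theorem~\ref{theorem6.1} is essentially a citation, asserting continuity of $h$ from standard mapping properties of the Riemann--Liouville integral (with references to Kilbas--Srivastava--Trujillo and Podlubny) and attributing the equivalence between \eqref{eq7.5} and \eqref{eq7.6} to the main result of Zhang's paper \cite{sal30}. You instead prove the equivalence directly: applying ${_0I_t^\alpha}$ and the composition identity $({_0I_t^\alpha}\,{_0^CD_t^\alpha}y)(t)=y(t)-y(0)-y'(0)t$ (valid here since $1<\alpha<2$ gives $n=2$), then eliminating the unknown $y'(0)$ through the condition $y(1)=u_b$, which produces exactly the Green's-function-type representation \eqref{eq7.6}; conversely, applying ${_0^CD_t^\alpha}$, which annihilates the affine part, together with ${_0^CD_t^\alpha}\,{_0I_t^\alpha}h=h$ and a direct check of the boundary values. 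This buys a self-contained, elementary derivation where the paper offers only a pointer to the literature, and your normalization $a=0$, $b=1$ correctly reflects what is implicit in \eqref{eq7.6}. Two caveats worth stating if you write this up: in the converse direction the term-by-term application of ${_0^CD_t^\alpha}$ to ${_0I_t^\alpha}h(\cdot,y(\cdot))$ needs the composition property in the generalized ($L_1$/absolutely continuous) sense that the paper itself records, which you correctly flag as the only delicate step; and, as in the paper, the notation $h(t,y(t))=-{_tI_b^\alpha}g(t,y(t))$ is really a nonlocal functional of the trajectory $y$ rather than a pointwise function of $(t,y(t))$ --- your proof is unaffected, since all that is used is continuity of $s\mapsto h(s,y(s))$, but the statement's phrasing inherits this looseness from the paper.
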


\begin{proof}
The continuity of $h$ is one of the properties of the Riemann--Liuoville
fractional integral operator ${_tI_b^\alpha}$ (see, for example, 
\cite{MR2218073,sal15}). The second part of the theorem 
is the main result of \cite{sal30}.
\end{proof}

Now, let $t_j=jk$, $j=0,1,\ldots,n$, and $k=\frac{b-a}{n}$.
Furthermore, let the numerical values of $y(t)$ at $t_0,t_1,\ldots,t_n$
have been obtained and let these values be $y_0,y_1,\ldots,y_n$, respectively,
and $y_0=u_a$. Now we are going to compute the value of $y(t)$ at $t_{n+1}$,
i.e., $y_{n+1}$. To this end, first we use the same idea of \cite{sal31}
to predict the values of $y_0,y_1,\ldots,y_n$. 
Then, we correct them using our method. Let
\begin{equation*}
a_j= \left\{
\begin{array}{ll}
\frac{k^\alpha}{\alpha(\alpha+1)}(n^{\alpha+1}-(n-\alpha)(n+1)^\alpha),
& j=0,\\
\frac{k^\alpha}{\alpha(\alpha+1)}((n-j+2)^{\alpha+1}
+(n-j)^{\alpha+1}-2(n-j+1)^{\alpha+1}),
&  1\leq j \leq n,\\
\frac{k^\alpha}{\alpha(\alpha+1)},
& j=n+1
\end{array} \right.
\end{equation*}
and
\begin{equation*}
b_j=\frac{k^\alpha}{\alpha}((n+1-j)^\alpha-(n-j)^\alpha.
\end{equation*}
Then, using \eqref{eq7.6}, we have
\begin{multline*}
y(t_{n+1})=u_a+(u_b-u_a)t_{n+1}
+\frac{1}{\Gamma(\alpha)}\int_0^{t_{n+1}}
(t_{n+1}-s)^{\alpha-1} h(s,y(s))ds\\
-\frac{t_{n+1}}{\Gamma(\alpha)}
\int_0^1 (1-s)^{\alpha-1}h(s,y(s))ds.
\end{multline*}
Therefore,
\begin{equation*}
y_{n+1}=u_a+(u_b-u_a)t_{n+1}+\frac{1}{\Gamma(\alpha)}
\sum_{i=0}^n a_i h(t_i,y_i)-\frac{t_{n+1}}{\Gamma(\alpha)}
\sum_{i=0}^n b_i h(t_i,y_i).
\end{equation*}
Finally, if $P_n$ is the interpolation polynomial of the nodes
$\{(t_i,y_i)\}_{i=0}^{n+1}$, then the following correction formula
is an approximation of the solution to problem \eqref{eq7.5} at the point $t_{n+1}$:
\begin{multline*}
y(t_{n+1})\simeq u_a+(u_b-u_a)t_{n+1}\\
+\frac{1}{\Gamma(\alpha)}\left(\frac{t_{n+1}}{2}\right)^\alpha
\sum_{k=1}^N l_k g\left(\frac{t_{n+1}}{2}(1+x_k),P_n(\frac{t_{n+1}}{2}(1+x_k)\right)
-\frac{t_{n+1}}{\Gamma(\alpha)}\sum_{i=0}^{n+1} b_i h(t_i,y_i),
\end{multline*}
where $x_k$ and $l_k$, $k=1,2,\ldots,N$, are the nodes and the weights
of the fractional Gauss--Jacobi quadrature rule. Now we are going to apply
this method to solve a concrete fractional problem of the calculus of the variations.

\begin{example}
\label{example6.2}
Consider the following Fractional Variational Problem (FVP):
to find the minimizer of the functional
\begin{equation}
\label{eq:fvp:a}
J[y]=\int_0^1 \left((_0^CD_t ^{\alpha}y(t))^2+y^2(t)\right)dt,
\quad 1 < \alpha <2,
\end{equation}
subject to
\begin{equation}
\label{eq:fvp:bc}
y(0)=1,\quad y(1)=\frac{2 e}{1 + e^2}.
\end{equation}
When $\alpha \rightarrow 1$, this FVP tends to the classical problem
of the calculus of variations
\begin{equation}
\label{eq:p:a1}
J[y]=\int_0^1 \left(y'(t)^2+y^2(t)\right)dt \longrightarrow \min,
\quad y(0)=1, \quad y(1)=\frac{2 e}{1 + e^2},
\end{equation}
which has the solution
\begin{equation}
\label{eq:s:a1}
y(t)=\frac{e^t + e^{2-t}}{1+e^2}.
\end{equation}
On the other hand, the fractional Euler--Lagrange equation
associated with \eqref{eq:fvp:a} is
\begin{equation}
\label{eq7.8}
2 \, {_t D_1^\alpha}\left(_0^CD_t ^{\alpha}y(t)\right)+2y(t)=0.
\end{equation}
Looking to Figure~\ref{Fig:3} and Figure~\ref{Fig:5},
we can see that the numerical solutions to the fractional Euler--Lagrange
equation \eqref{eq7.8} subject to boundary conditions \eqref{eq:fvp:bc}
are approximating the solution \eqref{eq:s:a1} to the classical variational
problem \eqref{eq:p:a1}. Note that the exact solution to the FVP
\eqref{eq:fvp:a}--\eqref{eq:fvp:bc} is unknown.
\begin{figure}
\includegraphics[scale=.75]{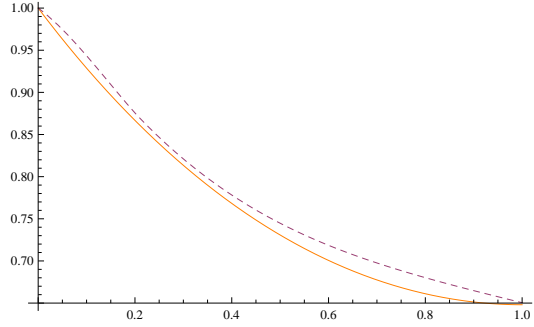}
\caption{The exact solution to \eqref{eq:p:a1} (solid line)
and the approximated solution to the FVP \eqref{eq:fvp:a}--\eqref{eq:fvp:bc}
with $\alpha=1.5$ (dashed line).}
\label{Fig:3}
\end{figure}
\begin{figure}
\includegraphics[scale=.75]{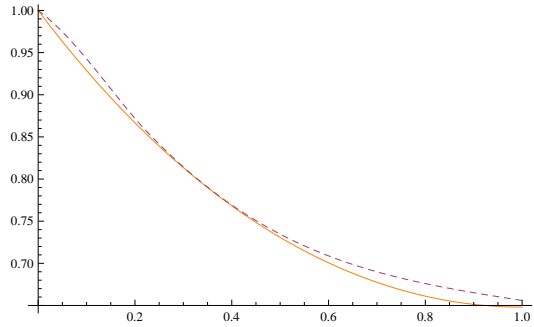}
\caption{The exact solution to \eqref{eq:p:a1} (solid line)
and the approximated solution to the FVP \eqref{eq:fvp:a}--\eqref{eq:fvp:bc}
with $\alpha=1.1$ (dashed line).}
\label{Fig:5}
\end{figure}
\end{example}


\section{Conclusions}
\label{sec7}

We presented a new numerical method for approximating Riemann--Liouvile
fractional integrals and Liouville--Caputo fractional derivatives. 
We applied the method for solving fractional linear and nonlinear 
initial value problems. Furthermore, we introduced a new method 
for solving fractional boundary value problems and applied this new method
for solving fractional Euler--Lagrange equations. Finally, by considering 
a concrete problem of the calculus of variations, we showed that the method 
is useful to solve fractional variational problems.

The results of the paper can be extended in several directions.
As an example of further possible developments in the field, 
we can mention the new fractional derivative with non-local 
and non-singular kernel proposed by Atangana and Baleanu \cite{Atangana:Bal}.
Such derivative seems to answer some outstanding questions that were posed 
within the field of fractional calculus, giving origin 
to chaotic behaviors \cite{MR3507052}. The importance to develop
new numerical methods to deal with such fractional calculus is well
presented in \cite{Alkahtani}. We claim that our results can be generalized
to cover the Atangana--Baleanu calculus. Similarly can be said
about numerical simulations with the Caputo--Fabrizio fractional 
order derivative \cite{MR3427809,Cap:Fab}.


\section*{Acknowledgements}

This work is part of first author's PhD project.
Partially supported by Islamic Azad University
(Science and Research branch of Tehran), Iran;
and CIDMA--FCT, Portugal, within project UID/MAT/04106/2013.
Jahanshahi was supported by a scholarship from the Ministry of Science,
Research and Technology of the Islamic Republic of Iran,
to visit the University of Aveiro, Portugal.
The hospitality and the excellent working conditions
at the University of Aveiro are here gratefully acknowledged.
The authors are grateful to two anonymous referees, for several comments 
and suggestions, which helped them to improve the manuscript. 



\bigskip


\end{document}